\documentclass[12pt,reqno]{amsart}

\usepackage{amssymb}

\newtheorem{theorem}{Theorem}[section]

\newtheorem{lemma}[theorem]{Lemma}
\newtheorem{corollary}[theorem]{Corollary}

\theoremstyle{definition}

\newtheorem{remark}[theorem]{Remark}

\numberwithin{equation}{section}

\newcommand{\ZZ}{\mathbb{Z}}
\newcommand{\PP}{\mathbb{P}}
\newcommand{\cO}{\mathcal{O}}

\begin{document}

\title[Harder-Narasimhan filtration of the direct image]{On the Harder-Narasimhan filtration of the direct image 
of the structure sheaf}

\author[I. Biswas]{Indranil Biswas}

\address{Department of Mathematics, Shiv Nadar University, NH91, Tehsil
Dadri, Greater Noida, Uttar Pradesh 201314, India}

\email{indranil.biswas@snu.edu.in, indranil29@gmail.com}

\author[M. Kumar]{Manish Kumar}

\address{Statistics and Mathematics Unit, Indian Statistical Institute,
Bangalore 560059, India}

\email{manish@isibang.ac.in}

\author[A.J. Parameswaran]{A. J. Parameswaran}

\address{Kerala School of Mathematics, Kunnamangalam PO, Kozhikode, Kerala, 673571, India}

\email{param@ksom.res.in}

\subjclass[2010]{14H30, 14G17, 14H60}

\keywords{Direct image, semistability, Harder-Narasimhan filtration}

\begin{abstract}
We compute the Harder-Narasimhan filtration of vector bundles $f_*\cO_Y$ for certain finite morphisms $f\,
:\,Y\,\longrightarrow\, X$ and in some other cases.
\end{abstract}

\maketitle

\section{Introduction}

The Harder-Narasimhan filtration of a vector bundle $E$ on a smooth projective curve $X$ is a canonical filtration which is useful to 
study the vector bundles which may not be semistable. While the filtration is unique and has nice properties, it is very difficult to 
compute in general. In this note we compute explicitly the Harder-Narasimhan filtration in the following instances.

\begin{enumerate}
\item Let $E$ be a rank two vector bundle on $X$ and $Y$ an ample divisor on $\PP_X(E)$. Let $f$ be the finite morphism obtained 
by the restriction of the structure morphism $\PP_X(E)\,\longrightarrow\, X$ to $Y$. Lemma \ref{lem1} computes the Harder-Narasimhan
filtration of $f_*\cO_Y$ in terms of the Harder-Narasimhan filtration of $E$.
 
\item Let $Y$ be a complete intersection of codimension $r$ in $\PP^n$, and let $f:Y\,\longrightarrow\, \PP^{n-r}$ be the projection
morphism from a linear subspace of dimension $r$. Then Theorem \ref{main} does the computation for $f_*\cO_Y$.
\end{enumerate}

Let $\phi\,:\,X\,\longrightarrow\, \PP^1$ be a finite morphism with $X$ a smooth irreducible curve and $E$
a vector bundle on $X$ with vanishing first and zeroth cohomology. Then we observe that the
direct image $\phi_*E$ is the direct sum of copies of $\cO_{\PP^1}(-1)$ (Lemma \ref{lem-f}).

\section{Direct image for curves in a ruled surface}

Let $X$ be an irreducible smooth projective curve over an algebraically
closed field $k$. Take a vector bundle $E$ on $X$ of rank two. Let
\begin{equation}\label{e1}
\phi\,\, :\,\,{\mathbb P}(E) \,\, \longrightarrow\,\, X
\end{equation}
be the projective bundle parametrizing the one-dimensional quotients of the fibers of $E$. We have the tautological line bundle
${\mathcal O}_{{\mathbb P}(E)}(1)\, \longrightarrow\, {\mathbb P}(E)$ whose fiber over any
$z\, \in\, {\mathbb P}(E)$ is the quotient of $E_{\phi(z)}$ represented by $z$, where
$\phi$ is the projection in \eqref{e1}. Take an ample line bundle
\begin{equation}\label{e2}
L\,:=\, {\mathcal O}_{{\mathbb P}(E)}(n)\otimes \phi^*A \,=\, {\mathcal O}_{{\mathbb P}(E)}(1)^{\otimes n}\otimes \phi^*A 
\end{equation}
on ${\mathbb P}(E)$, where $A$ is a line bundle on $X$. Since $L$ is ample, its restriction to
any fiber of $\phi$ is ample, and this implies that $n\, \geq\, 1$.

Let $Y\,=\, {\rm Div}(s)\, \subset\, {\mathbb P}(E)$ be the divisor of a nonzero section
$s\, \in\, H^0({\mathbb P}(E),\, L)\setminus \{0\}$ (see \eqref{e2}); in other words, we have
$Y\, \in\, \big\vert L\big\vert$. Let
$$
f\, :=\, \phi\big\vert_Y\, :\, Y\, \longrightarrow\, X
$$
be the restriction of $\phi$ in \eqref{e1} to $Y$. We have the short exact sequence of coherent
sheaves on ${\mathbb P}(E)$
\begin{equation}\label{e3}
0\, \longrightarrow\, {\mathcal O}_{{\mathbb P}(E)}(-Y)\,=\, L^*\, \longrightarrow\,
{\mathcal O}_{{\mathbb P}(E)} \, \longrightarrow\, {\mathcal O}_Y \, \longrightarrow\, 0;
\end{equation}
the homomorphism $L^*\, \longrightarrow\, {\mathcal O}_{{\mathbb P}(E)}$ in \eqref{e3} sends
any $v\, \in\, L^*_x$, $x\, \in\,{\mathbb P}(E)$, to $v(s(x))\, \in\, k\,=\,
({\mathcal O}_{{\mathbb P}(E)})_x$.
Taking the direct image, under $\phi$, of the short exact sequence in \eqref{e3} we have the exact sequence
\begin{equation}\label{e4}
0 \, \longrightarrow\, \phi_*{\mathcal O}_{{\mathbb P}(E)}\,=\, {\mathcal O}_X
\, \longrightarrow\, \phi_*{\mathcal O}_Y \,=\, f_*{\mathcal O}_Y \, \longrightarrow\, 
R^1\phi_* {\mathcal O}_{{\mathbb P}(E)}(-Y)\, \longrightarrow\ 0,
\end{equation}
because $\phi_*{\mathcal O}_{{\mathbb P}(E)}(-Y)\,=\,0\,=\, R^1\phi_* {\mathcal O}_{{\mathbb P}(E)}$.

We will calculate
$$
R^1\phi_* {\mathcal O}_{{\mathbb P}(E)}(-Y)\, =\, R^1\phi_* L^*
$$
(see \eqref{e3} for the isomorphism).

Note that $R^1\phi_* {\mathcal O}_{{\mathbb P}(E)}(-Y)\, =\, R^1\phi_* L^* \,=\, 0$ if $n\, =\,1$
(see \eqref{e2} for $n$). Indeed, this follows immediately from the fact that $H^1({\mathbb P}^1_k,
\, {\mathcal O}_{{\mathbb P}^1_k}(-1))\,=\,0$. So we will assume that $n\, \geq\, 2$.

By the projection formula and \eqref{e2},
\begin{equation}\label{e5}
R^1\phi_* L^* \,=\, R^1\phi_*({\mathcal O}_{{\mathbb P}(E)}(-n)\otimes \phi^*A^*)\,=\,
(R^1\phi_*{\mathcal O}_{{\mathbb P}(E)}(-n))\otimes A^*.
\end{equation}

Let $T_\phi\,:=\, K^{-1}_{{\mathbb P}(E)}\otimes \phi^* K_X$ be the relative tangent bundle for the
projection $\phi$ in \eqref{e1}, where $K_X$ and $K_{{\mathbb P}(E)}$ are the canonical
line bundles of $X$ and ${\mathbb P}(E)$ respectively. We note that
$$
T_\phi\,=\, {\mathcal O}_{{\mathbb P}(E)}(2)\otimes \bigwedge\nolimits^2 \phi^* E^*\,=\,
{\mathcal O}_{{\mathbb P}(E)}(2)\otimes \phi^* \det E^* .
$$
Indeed, this follows immediately from the fact that $T_\phi$ is identified with the line bundle
${\rm Hom}({\mathcal O}_{{\mathbb P}(E)}(-1)\otimes\phi^*\det E,\, {\mathcal O}_{{\mathbb P}(E)}(1))$.
The relative version of Serre duality says that
$$
R^1\phi_*{\mathcal O}_{{\mathbb P}(E)}(-n)\,=\, (\phi_*(T^*_\phi\otimes {\mathcal O}_{{\mathbb P}(E)}(n)))^*
$$
$$
=\, (\phi_* ({\mathcal O}_{{\mathbb P}(E)}(n-2)\otimes \phi^* \det E))^*\,=\,
(\phi_* {\mathcal O}_{{\mathbb P}(E)}(n-2))^* \otimes \det E^*;
$$
the last isomorphism is given by the projection formula. This combined with \eqref{e5}, yields
$$
R^1\phi_* L^* \,=\, (\phi_* {\mathcal O}_{{\mathbb P}(E)}(n-2))^*\otimes (\det E^*)\otimes A^*\,=\,
\text{Sym}^{n-2}(E)^*\otimes A^* \otimes \det E^*
$$
$$
=\,\text{Sym}^{n-2}(E^*)\otimes A^*\otimes \det E^* ;
$$
this is because $\phi_* {\mathcal O}_{{\mathbb P}(E)}(j)\,=\, \text{Sym}^j(E)$ for all $j\, \geq\, 0$
(recall that $n\, \geq\, 2$ and $\text{Sym}^{0}(E^*) \,=\, {\mathcal O}_X$).
Substituting this in \eqref{e4} we get an exact sequence
\begin{equation}\label{e6}
0 \, \longrightarrow\, {\mathcal O}_X \, \longrightarrow\, f_*{\mathcal O}_Y \, \longrightarrow\, 
\text{Sym}^{n-2}(E^*)\otimes A^*\otimes \det E^* \, \longrightarrow\ 0,
\end{equation}
This implies that
\begin{equation}\label{e7}
(f_*{\mathcal O}_Y)/{\mathcal O}_X\,\,=\,\, \text{Sym}^{n-2}(E^*)\otimes A^*\otimes \det E^*\, .
\end{equation}

\begin{lemma}\label{lem1}
Assume that one of the following two holds:
\begin{enumerate}
\item The characteristic of $k$ is zero and $E$ is semistable.

\item The vector bundle $E$ is strongly semistable.
\end{enumerate}
Then $(f_*{\mathcal O}_Y)/{\mathcal O}_X$ is semistable if the characteristic of $k$ is zero, and
$(f_*{\mathcal O}_Y)/{\mathcal O}_X$ is strongly semistable if the characteristic of $k$ is positive.

If $E$ is not semistable, and
$S\, \subset\, E$ is the Harder--Narasimhan filtration of $E$, then
$$
(E/S)^{\otimes (2-n)}\otimes A^*\otimes \det E^* \, \subset\,
(E/S)^{\otimes (3-n)}\otimes E^*\otimes A^*\otimes \det E^*
$$
$$
\subset\, (E/S)^{\otimes (4-n)}\otimes {\rm Sym}^{2}(E^*)\otimes A^*\otimes \det E^*
\, \subset\, \cdots \, \subset\, (E/S)^{\otimes -2}\otimes{\rm Sym}^{n-4}(E^*)
\otimes A^*\otimes \det E^*
$$
$$
\subset\, (E/S)^*\otimes{\rm Sym}^{n-3}(E^*)\otimes A^*\otimes \det E^*
\, \subset\,{\rm Sym}^{n-2}(E^*)\otimes A^*\otimes \det E^*
$$
is the Harder--Narasimhan filtration of $(f_*{\mathcal O}_Y)/{\mathcal O}_X$.
\end{lemma}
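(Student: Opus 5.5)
The plan is to work entirely from the identification \eqref{e7}, which says $(f_*{\mathcal O}_Y)/{\mathcal O}_X\,=\,\text{Sym}^{n-2}(E^*)\otimes M$ with $M\,:=\,A^*\otimes\det E^*$ a line bundle. Tensoring with a line bundle preserves (strong) semistability and carries Harder--Narasimhan filtrations to Harder--Narasimhan filtrations. So it suffices to study $\text{Sym}^m(E^*)$ with $m\,=\,n-2\,\geq\, 0$. For $n\,=\,1$ there is nothing to prove, since the quotient is zero.

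\emph{Semistable case.} If $E$ is semistable, then so is $E^*$. In characteristic zero I will use the standard fact that tensor products of semistable bundles are semistable. This makes $(E^*)^{\otimes m}$ semistable, and $\text{Sym}^m(E^*)$ is a direct summand of it (via symmetrization) of the same slope, hence semistable. In characteristic $p$ with $E$ strongly semistable, I will use the analogous fact that tensor products of strongly semistable bundles are strongly semistable. Then $(E^*)^{\otimes m}$ is strongly semistable, and $\text{Sym}^m(E^*)$ is a quotient of it of the same slope, so it is semistable. Since Frobenius pullback commutes with $\text{Sym}^m$, the same argument applied to $F^{j*}E$ gives strong semistability.

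\emph{Non-semistable case.} Let $S\,\subset\, E$ be the maximal destabilizing line subbundle and $Q\,=\,E/S$, so $\deg S\,>\,\deg Q$. Dualizing gives $0\to Q^*\to E^*\to S^*\to 0$. For $0\le k\le m$, multiplication in the symmetric algebra gives a map
$$(Q^*)^{\otimes k}\otimes \text{Sym}^{m-k}(E^*)\,\longrightarrow\, \text{Sym}^m(E^*),$$
and I will show that its image $F_k$ is a subbundle. Working locally with a frame $x,y$ of $E^*$, where $x$ spans $Q^*$, the sheaf $F_k$ is spanned by the monomials $x^ay^b$ with $a\ge k$, $a+b\,=\,m$. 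Hence the $F_k$ form a decreasing chain of subbundles $F_m\subset F_{m-1}\subset\cdots\subset F_0\,=\,\text{Sym}^m(E^*)$, exactly as in the statement with $k\,=\,n-2-j$. Each successive quotient is the line bundle
$$F_k/F_{k+1}\,\cong\,(Q^*)^{\otimes k}\otimes (S^*)^{\otimes (m-k)},$$
because the monomial $x^ky^{m-k}$ survives and $y$ projects to a generator of $S^*$. The injectivity of $(Q^*)^{\otimes k}\otimes\text{Sym}^{m-k}(E^*)\to \text{Sym}^m(E^*)$ (multiplication by $x^k$ in a polynomial ring) is what identifies the terms with those written in the lemma.

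Finally, the degree of $F_k/F_{k+1}$ is $-k\deg Q-(m-k)\deg S$. This quantity strictly decreases as $k$ decreases, because $\deg S>\deg Q$; in the lemma's ordering, starting from the smallest subbundle $F_m\,=\,(Q^*)^{\otimes m}$, the slopes strictly decrease. A filtration whose graded pieces are semistable (here, line bundles) with strictly decreasing slopes is the Harder--Narasimhan filtration, by uniqueness. No strong semistability is needed in positive characteristic for this part. I expect the main care point to be the local verification that the $F_k$ are genuine subbundles with the stated line bundle quotients, independent of the choice of frame; this follows because $x$ is determined up to units and $y$ up to adding multiples of $x$, which preserves each $F_k$.
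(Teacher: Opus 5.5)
Your proposal is correct and follows the paper's argument. You reduce via \eqref{e7} to $\text{Sym}^{n-2}(E^*)$ twisted by a line bundle, use Ramanan--Ramanathan-type results in the (strongly) semistable case, and in the unstable case use the filtration by the images of $(Q^*)^{\otimes k}\otimes\text{Sym}^{m-k}(E^*)$, whose graded pieces are line bundles of strictly ordered degree; the only differences are cosmetic: you route through tensor powers instead of citing the symmetric-power statements directly, and you give the local check (which the paper takes for granted) that these terms are subbundles with the stated line-bundle quotients.
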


\begin{proof}
If the characteristic of $k$ is zero, and $E$ is semistable, then the symmetric product 
$$\text{Sym}^{n-2}(E^*)\ =\ \text{Sym}^{n-2}(E)^*$$ is semistable \cite[p.~285, Theorem 3.18]{RR}.
On the other hand, if the characteristic of $k$ is positive, and $E$ is strongly semistable, then 
$\text{Sym}^{n-2}(E^*)$ is strongly semistable \cite[p.~288, Theorem 3.23]{RR}. Therefore, the
first statement follows from \eqref{e7}.

Let $A\, \subset\, B$ be the Harder--Narasimhan filtration of an unstable rank two vector bundle $B$ on $X$.
We will show that for any integer $m\, \geq\, 1$, the Harder--Narasimhan filtration of the symmetric
product $\text{Sym}^m(B)$ is the following:
\begin{equation}\label{f1}
A^{\otimes m}\, \subset\, A^{\otimes (m-1)}\otimes B \, \subset\, A^{\otimes (m-2)}\otimes\text{Sym}^2(B)\,
\subset\, \cdots\, \subset\, A\otimes\text{Sym}^{m-1}(B)\,\subset\, \text{Sym}^m(B).
\end{equation}
To prove that \eqref{f1} is the Harder--Narasimhan filtration, note that for
any $0\, \leq\, i\, \leq\, m-1$, the successive quotient $A^{\otimes i}\otimes\text{Sym}^{m-i}(B)/
(A^{\otimes (i+1)}\otimes\text{Sym}^{m-i-1}(B))$ in \eqref{f1} is the line bundle $A^{\otimes i}\otimes
(B/A)^{\otimes (m-i)}$. This implies the following three statements:
\begin{enumerate}
\item The successive quotient $A^{\otimes i}\otimes\text{Sym}^{m-i}(B)/(A^{\otimes (i+1)}\otimes
\text{Sym}^{m-i-1}(B))$ in \eqref{f1} is semistable for all $0\, \leq\, i\, \leq\, m-1$,

\item the inequality $$\text{degree}(A^{\otimes m})\ =\ m\cdot\text{degree}(A)\ > $$
$$
(m-1)\cdot \text{degree}(A)+ \text{degree}(B/A) \ =\ \text{degree}(A^{\otimes (m-1)}\otimes (B/A))$$
holds, because
$\text{degree}(A)\,> \, \text{degree}(B/A)$ (recall that $A\, \subset\, B$ is the Harder--Narasimhan
filtration of $B$), and

\item the inequality $$\text{degree}(A^{\otimes i}\otimes\text{Sym}^{m-i}(B)/(A^{\otimes (i+1)}\otimes
\text{Sym}^{m-i-1}(B)))\ = $$
$$i\cdot \text{degree}(A)+(m-i)\cdot \text{degree}(B/A)\ <\ (i+1)\text{degree}(A)+(m-i-1)\text{degree}(B/A)
$$
$$
=\ \text{degree}(A^{\otimes (i+1)}\otimes\text{Sym}^{m-i-1}(B)/(A^{\otimes (i+2)}\otimes
\text{Sym}^{m-i-2}(B)))$$
holds for all $0\, \leq\, i\, \leq\, m-2$, because
$\text{degree}(A)\,>\, \text{degree}(B/A)$.
\end{enumerate}
{}From these three statements it follows immediately that \eqref{f1} is the
Harder--Narasimhan filtration of $\text{Sym}^m(B)$ (see \cite[p.~16, Definition 1.3.2]{HL}).

Since \eqref{f1} is the Harder--Narasimhan filtration of $\text{Sym}^m(B)$,
the second statement of the lemma follows from \eqref{e7}.
\end{proof}

\begin{remark}
When $\dim X \, >\,1$, the first statement of Lemma \ref{lem1} remains valid. There is no change in the proof.
The second statement of Lemma \ref{lem1} remains valid if $\dim X\,=\, 2$.
But when $\dim X \, >\,2$, the second statement of Lemma \ref{lem1} needs to be modified. In this case, $(E/S)^*$
is a reflexive subsheaf of $E$ and it need not be locally free. Hence $((E/S)^*)^{\otimes j}$ may have torsion
part when $j\, >\, 1$. In the second statement of Lemma \ref{lem1},
$((E/S)^*)^{\otimes j}$ should be replaced by the torsionfree part of $((E/S)^*)^{\otimes j}$
\end{remark}

\begin{remark}
If $B$ is an unstable vector bundle of rank at least three, then the 
Harder--Narasimhan filtration of $\text{Sym}^m(B)$
 is --- in general --- much more complicated than \eqref{f1}. In fact, there
is no general formula to derive the Harder--Narasimhan filtration of $\text{Sym}^m(B)$
just from the Harder--Narasimhan filtration of $B$. The type of the Harder--Narasimhan filtration
of $\text{Sym}^m(B)$ depends --- in general --- on the degrees of the subsheaves occurring in the
Harder--Narasimhan filtration of $B$, when the rank of $B$ is at least three. This is unlike the case of
rank two where the Harder--Narasimhan filtration of $\text{Sym}^m(B)$ is given by the
Harder--Narasimhan filtration $A\,\subset\, B$ of $B$ in a way which does not depend on the degrees
of $A$ and $B/A$.
\end{remark}

\section{Intersection of two hypersurfaces}

Let $E$ be a vector bundle of rank three on $X$. As before,
\begin{equation}\label{e8}
\phi\,\, :\,\,{\mathbb P}(E) \,\, \longrightarrow\,\, X
\end{equation}
is the projective bundle parametrizing the one-dimensional quotients of the fibers of $E$,
and ${\mathcal O}_{{\mathbb P}(E)}(1)\, \longrightarrow\, {\mathbb P}(E)$ is the
tautological line bundle whose fiber over any $z\, \in\, {\mathbb P}(E)$ is the one-dimensional
quotient of $E_{\phi(z)}$ represented by $z$.

For $i\,=\, 1,\, 2$, take an ample line bundle
\begin{equation}\label{e9}
L_i\,\,:=\,\, {\mathcal O}_{{\mathbb P}(E)}(n_i)\otimes \phi^*A_i \,\,\longrightarrow\,\, {\mathbb P}(E),
\end{equation}
where $A_i$ is a line bundle on $X$. Also, take a nonzero section
\begin{equation}\label{e10a}
s_i\,\,\in\,\, H^0({\mathbb P}(E),\, L_i)\setminus \{0\}
\end{equation}
of $L_i$ in \eqref{e9} such that the map
\begin{equation}\label{e10}
f\,:=\, \phi\big\vert_{{\rm Div}(s_1)\cap {\rm Div}(s_2)}\,\, :\,\, Y\,:=\,
{\rm Div}(s_1)\cap {\rm Div}(s_2)\, \longrightarrow\, X
\end{equation}
is a finite morphism, where $\phi$ is the projection in \eqref{e8} and $s_i$ are the sections in
\eqref{e10a}; in \eqref{e10}, ${\rm Div}(s_i)$ denotes the divisor of $s_i$. We have the exact
sequence on ${\mathbb P}(E)$
\begin{equation}\label{e11}
0\, \longrightarrow\, L^*_1\otimes L^*_2\, \xrightarrow{\,\,\, \gamma_1\,\,}\,
L^*_1\oplus L^*_2\, \xrightarrow{\,\,\, \gamma_2\,\,}\, {\mathcal O}_{{\mathbb P}(E)}
\, \longrightarrow\, {\mathcal O}_Y \, \longrightarrow\,0,
\end{equation}
where the map $\gamma_1$ sends any $v\otimes w\, \in\, (L^*_1\otimes L^*_2)_x$, $x\, \in\, {\mathbb P}(E)$,
to $$(w(s_2(x))\cdot v,\, -v(s_1(x))\cdot w)\,\,\in\,\, (L^*_1\oplus L^*_2)_x,$$ and
the map $\gamma_2$ sends any $v\oplus w\, \in\, (L^*_1\oplus L^*_2)_x$, $x\, \in\, {\mathbb P}(E)$,
to $v(s_1(x))+ w(s_2(x))\,\in\,k\,=\, \left({\mathcal O}_{{\mathbb P}(E)}\right)_x$.

Let $\textbf{I}\,:=\, \gamma_2(L^*_1\oplus L^*_2)\, \subset\, {\mathcal O}_{{\mathbb P}(E)}$ be the image of
$\gamma_2$ in \eqref{e11}. The exact sequence in \eqref{e11} breaks into two short exact sequences:
\begin{align}
0\, \longrightarrow\, L^*_1\otimes L^*_2\, \xrightarrow{\,\,\, \gamma_1\,\,}\,
L^*_1\oplus L^*_2\, \xrightarrow{\,\,\, \gamma_2\,\,}\, \textbf{I}\, \longrightarrow\, 0\label{e12}\\
0\, \longrightarrow\, \textbf{I}\, \longrightarrow\, {\mathcal O}_{{\mathbb P}(E)}
\, \longrightarrow\, {\mathcal O}_Y \, \longrightarrow\,0.\label{e13}
\end{align}
The short exact sequence in \eqref{e13} produces the following long exact sequence of direct images:
\begin{align}
0\,=\, \phi_*\textbf{I} \, \longrightarrow\, \phi_*{\mathcal O}_{{\mathbb P}(E)}\,=\, {\mathcal O}_X
\, \longrightarrow\, f_*{\mathcal O}_Y\, \longrightarrow\, R^1\phi_*\textbf{I}
\,\longrightarrow\, R^1\phi_*{\mathcal O}_{{\mathbb P}(E)} \nonumber\\
\,=\, 0 \, \longrightarrow\, R^1f_*{\mathcal O}_Y\,=\,0
\, \longrightarrow\, R^2\phi_*\textbf{I}\, \longrightarrow\, R^2\phi_*{\mathcal O}_{{\mathbb P}(E)}\,=\, 0. \label{e14}
\end{align}
In \eqref{e14} we have used the fact that $H^1({\mathbb P}^2_k,\, {\mathcal O}_{{\mathbb P}^2_k})\,=\, 0$
(respectively, $H^2({\mathbb P}^2_k,\, {\mathcal O}_{{\mathbb P}^2_k})\,=\, 0$) to conclude that
$R^1\phi_*{\mathcal O}_{{\mathbb P}(E)} \,=\, 0$ (respectively, $R^2\phi_*{\mathcal O}_{{\mathbb P}(E)} \,=\, 0$);
also, $ R^1f_*{\mathcal O}_Y\,=\,0$ because $f$ is a finite map. From \eqref{e14} we conclude that
\begin{align}
R^2\phi_*\textbf{I}\, =\, 0, \label{e15}\\
(f_*{\mathcal O}_Y)/{\mathcal O}_X \, =\, R^1\phi_*\textbf{I}.\label{e16}
\end{align}

The short exact sequence in \eqref{e12} produces the following long exact sequence of direct images:
\begin{equation}\label{e17}
R^1\phi_*(L^*_1\oplus L^*_2)\, \longrightarrow\, R^1\phi_*\textbf{I} \, \longrightarrow\, R^2\phi_*(L^*_1\otimes L^*_2)
\, \xrightarrow{\,\,\, \gamma_{1*}\,\,}\, R^2\phi_* (L^*_1\oplus L^*_2) \, \longrightarrow\, R^2\phi_*\textbf{I}.
\end{equation}
In \eqref{e17}, we have $R^1\phi_*(L^*_1\oplus L^*_2)\,=\, 0$ because $H^1({\mathbb P}^2_k,\, {\mathcal O}_{{\mathbb P}^2_k}(m))
\,=\, 0$ for all $m\, \in\, {\mathbb Z}$. Therefore, using \eqref{e15} and \eqref{e16} in \eqref{e17} it is deduced that
\begin{equation}\label{e18}
(f_*{\mathcal O}_Y)/{\mathcal O}_X \, =\, R^1\phi_*\textbf{I}\,=\, {\rm kernel}(\gamma_{1*}).
\end{equation}

We will calculate $R^2\phi_*(L^*_1\otimes L^*_2)$ and $R^2\phi_*(L^*_1\oplus L^*_2)$
in \eqref{e17}. From \eqref{e9},
\begin{equation}\label{e19}
L^*_1\otimes L^*_2\,=\, {\mathcal O}_{{\mathbb P}(E)}(-n_1-n_2)\otimes \phi^*(A^*_1\otimes A^*_2),
\end{equation}
\begin{equation}\label{e19b}
L^*_1\oplus L^*_2\,=\, ({\mathcal O}_{{\mathbb P}(E)}(-n_1)\otimes \phi^* A^*_1)\oplus
({\mathcal O}_{{\mathbb P}(E)}(-n_2)\otimes \phi^* A^*_2).
\end{equation}
Let $K_\phi\,:=\, K_{{\mathbb P}(E)}\otimes (\phi^* K_X)^*$ be the relative canonical bundle for the
projection $\phi$ in \eqref{e8}. Note that
$$
K_\phi\,\,=\,\, {\mathcal O}_{{\mathbb P}(E)}(-3)\otimes \phi^*\det E.
$$
Hence using the relative version of Serre duality and \eqref{e19}, we have
\begin{equation}\label{e20}
R^2\phi_*(L^*_1\otimes L^*_2)\,\,=\,\, (\phi_*({\mathcal O}_{{\mathbb P}(E)}(n_1+n_2-3) \otimes
\phi^*(A_1\otimes A_2\otimes \det E)))^*
\end{equation}
$$
=\,\, (\text{Sym}^{n_1+n_2-3}(E)\otimes A_1\otimes A_2\otimes \det E)^*\,\,=\,\,
\text{Sym}^{n_1+n_2-3}(E^*)\otimes (A_1\otimes A_2\otimes \det E)^*,
$$
where the second isomorphism is given by the projection formula.
If $n_1+n_2\, <\, 3$, then we have $R^1\phi_*\textbf{I}\,=\, 0$ (use \eqref{e17}, \eqref{e20} and $R^1\phi_*(L^*_1\oplus L^*_2)\,=\, 0$).
Therefore, it is assumed that $n_1+n_2\, \geq\, 3$.

Similarly, using the relative version of Serre duality, and \eqref{e19b}, we have
\begin{equation}\label{e21}
R^2\phi_*(L^*_1\oplus L^*_2)\,\,=
\end{equation}
$$
(\phi_*({\mathcal O}_{{\mathbb P}(E)}(n_1-3) \otimes
\phi^*(A_1\otimes \det E)))^*
\oplus (\phi_*({\mathcal O}_{{\mathbb P}(E)}(n_2-3) \otimes \phi^*(A_2\otimes \det E)))^*
$$
$$
=\,\, (\text{Sym}^{n_1-3}(E)\otimes A_1\otimes \det E)^*\oplus (\text{Sym}^{n_2-3}(E)\otimes A_2\otimes \det E)^*
$$
$$
\,\,=\,\,
(\text{Sym}^{n_1-3}(E^*)\otimes (A_1\otimes\det E)^*)\oplus (\text{Sym}^{n_2-3}(E^*)\otimes (A_2\otimes\det E)^*);
$$
$\text{Sym}^j(E^*)\,=\, 0$ for $j\, <\, 0$.

Note that $s_i$ in \eqref{e10a} is a section of $\text{Sym}^{n_i}(E)\otimes A_i$. Using the isomorphisms in
\eqref{e20} and \eqref{e21}, the map $\gamma_1$ in \eqref{e17} is given by contractions using $s_1$ and $s_2$.
Now \eqref{e18} describes $(f_*{\mathcal O}_Y)/{\mathcal O}_X$.

Extending this to higher rank vector bundles on $X$ becomes cumbersome. In fact, the generalization of
the breaking up of \eqref{e11} into \eqref{e12} and \eqref{e13} becomes too involved to be
able to use them in the way it is done above.

\section{Complete intersection in projective space}

Let $H_1,\, \cdots,\, H_r$ be hypersurfaces in $\PP^n_k$ of degree $d_1,\,\cdots,\, d_r$ respectively
such that $X_i\,=\, H_1\cap H_2\ldots \cap H_i$ is a complete intersection of dimension $n-i$ for
any $1\,\le\, i\,\le\, r$. Let $L_1\,\subset\, L_2\,\subset\, \ldots\, \subset\, L_r$ be linear
subspace in the projective space $\PP^n_k$ such that
$\dim L_i\,=\,i-1$ and $L_i\cap X_i$ is empty. Let $$f_i\ :\ X_i\ \longrightarrow\ \PP^{n-i}_k$$
be the natural projection from $L_i$.

\begin{lemma}\label{l1}
 Let $X$ be $X_r$ and $f\,=\,f_r$ in the above notation. Then
$$f_*\cO_X\ = \ \oplus_j [\cO_{\PP^{n-r}_k}(-j)]^{a_j}$$ is a direct sum of line bundles of
nonpositive degrees.
\end{lemma}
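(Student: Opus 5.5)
The approach is to show that $f_*\cO_X$ is an arithmetically Cohen--Macaulay sheaf on $\PP^{n-r}_k$, so it splits by Horrocks' criterion, and then to control the twists using the graded structure of the homogeneous coordinate ring.

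Choose coordinates $x_0,\dots,x_n$ with $L_r=\{x_0=\cdots=x_{n-r}=0\}$, so that $f$ is given by $(x_0:\cdots:x_{n-r})$. Since $L_r\cap X=\emptyset$, the morphism $f$ is finite. Let $S=k[x_0,\dots,x_{n-r}]$ and let $R=k[x_0,\dots,x_n]/(F_1,\dots,F_r)$ be the homogeneous coordinate ring of the complete intersection $X=X_r$. Then $R$ is a Cohen--Macaulay graded ring of dimension $n-r+1$. The condition $L_r\cap X=\emptyset$ says that $x_0,\dots,x_{n-r}$ generate an ideal of $R$ whose radical is the irrelevant ideal, so they form a homogeneous system of parameters. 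Hence $R$ is a finitely generated graded $S$-module. Since $R$ is Cohen--Macaulay, a system of parameters is a regular sequence, so $R$ has depth $n-r+1=\dim S$ over $S$, and by Auslander--Buchsbaum $R$ is a graded free $S$-module:
$$R\ \cong\ \bigoplus_j S(-j)^{a_j}.$$

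Next, identify $f_*\cO_X$ with the sheaf associated to $R$ on $\PP^{n-r}_k$. Indeed, $\cO_X(m)=f^*\cO_{\PP^{n-r}}(m)$, because $f$ is the linear projection. The graded module $R$ sheafifies on $\mathrm{Proj}\,S$ to $f_*\widetilde{R}=f_*\cO_X$; the fact that $L_r\cap X=\emptyset$ is what makes $\mathrm{Proj}\,R\to\mathrm{Proj}\,S$ an everywhere defined finite morphism. This gives
$$f_*\cO_X=\bigoplus_j \cO_{\PP^{n-r}}(-j)^{a_j}.$$

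It remains to see that all $j\ge 0$. The module $R$ is generated over $S$ in nonnegative degrees, since $R_m=0$ for $m<0$. So a minimal homogeneous basis has degrees $j\ge 0$, and only $j=0$ occurs once, from the constant $1$. Alternatively, this follows from $H^0(\PP^{n-r},\, f_*\cO_X(-1))=H^0(X,\,\cO_X(-1))=0$ together with $h^0(\cO_X)=1$, using connectedness when $n-r\ge1$.

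The step needing most care is the Cohen--Macaulay/regular-sequence argument, and the equality $\dim R=n-r+1$ relies on $X$ being a complete intersection. The hypotheses on the intermediate $X_i$ and $L_i$ are not needed beyond $i=r$. If $n-r=0$, the base is a point and the statement is trivial.
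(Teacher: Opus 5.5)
Your argument is correct, but it follows a different route from the paper's.

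\textbf{The paper's route.} The paper argues on $\PP^{n-r}_k$ with sheaf cohomology. It applies Horrocks' splitting criterion, using the vanishing $H^m(\PP^{n-r}_k,(f_*\cO_X)(j))=H^m(X,\cO_X(j))=0$ for $0<m<n-r$, which is the intermediate cohomology of a complete intersection. It then excludes positive-degree summands via $h^0(f_*\cO_X)=1$. Local freeness of $f_*\cO_X$, i.e.\ flatness of $f$, is left implicit there.

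\textbf{Your route.} You work with the graded ring $R$ instead. The linear forms defining the projection form a homogeneous system of parameters of the Cohen--Macaulay ring $R$, so graded Auslander--Buchsbaum makes $R$ free over $S$. This gives local freeness and the splitting in one step. Nonnegativity of the twists then reduces to $R$ being concentrated in nonnegative degrees.

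\textbf{What each buys.} The two proofs are two faces of the same fact: Horrocks' criterion is the sheafified form of ``a graded $S$-module with vanishing intermediate local cohomology is free.'' Yours has a concrete payoff, though. Comparing Hilbert series of $R\cong\bigoplus_j S(-j)^{a_j}$ gives
\[
\sum_j a_j t^j \;=\; \frac{\prod_{i=1}^r (1-t^{d_i})}{(1-t)^r}\;=\;\prod_{i=1}^r\bigl(1+t+\cdots+t^{d_i-1}\bigr).
\]
So the $a_j$ come out in closed form. This recovers Theorem \ref{main}(2) directly (for $n-r\ge 1$, where $R_m=H^0(X,\cO_X(m))$), and likewise the subsequent corollaries.

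\textbf{A minor remark.} In your alternative argument, $h^0(\cO_X)=1$ comes from $R$ having depth at least $2$, so that $H^0(X,\cO_X)=R_0=k$; connectedness alone does not give it. That alternative is unnecessary anyway, since graded freeness already gives $a_0=\dim_k R_0=1$.
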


\begin{proof}
By Horrocks' criterion (\cite{Horrocks}), it is enough to show that $H^m(\PP^{n-r}_k,(f_*\cO_X)(j))\,=\,0$ for all $j$ and for 
all $0\,<\, m \,<\, n-r$.

Note that $$H^m(\PP^{n-r}_k,\, (f_*\cO_X)(j))\,=\,H^m(X,\, \cO_X(j))\,=\,H^m(\PP^n_k,\, \cO_{\PP^n}(j))\,=\,0$$
for $1\,<\,m\,<\,n-r$ as $X$ is a complete intersection of dimension $n-r$.
For $j\,<\, 0$, we have $a_j\,=\,0$ because $H^0(\PP^{n-r}_k,\,f_*\cO_{X})$ is one dimensional so
$f_*\cO_{X}$ can't have a positive degree subbundle.
\end{proof}

Let
\begin{equation}\label{d1}
h_{i,j}\,=\,\dim(H^0(X_i,\,\cO_{X_i}(j)))
\end{equation}
for $0\,\le\, i\,\le\, r$ and $j\,\in\,\ZZ$ where we consider $X_0$ to be $\PP^n$. Note
that $h_{i,j}\,=\,0$ for $j\,<\,0$ and $0\,\le\, i\,\le\, r$. Moreover $h_{0,j}=\binom{n+j}{n}$ for $j\ge 0$.

\begin{theorem}\label{main}
The $h_{i,j}$'s and $a_j$'s (see \eqref{d1} and Lemma \ref{l1}) satisfy the following relations:
\begin{enumerate}
\item $h_{i,j}\,=\,h_{i-1,j}-h_{i-1,j-d_i}$ for $i\,>\, 0$ and all $j$.

\item $\sum_{j=0}^m \binom{n-r+m-j}{m-j}a_j\,=\,h_{r,m}$ for all $m\,\ge\, 0$.
\end{enumerate}
\end{theorem}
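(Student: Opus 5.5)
For part (1), I would use the ideal sheaf sequence of $X_i$ inside $X_{i-1}$. Since $X_i = X_{i-1}\cap H_i$ is a complete intersection of the expected dimension, $H_i$ is cut out by a degree $d_i$ form whose restriction to $X_{i-1}$ is a nonzerodivisor. This gives a short exact sequence
$$0\longrightarrow \cO_{X_{i-1}}(j-d_i)\longrightarrow \cO_{X_{i-1}}(j)\longrightarrow \cO_{X_i}(j)\longrightarrow 0 .$$
Taking cohomology, the relation $h_{i,j}=h_{i-1,j}-h_{i-1,j-d_i}$ follows once $H^1(X_{i-1},\,\cO_{X_{i-1}}(j-d_i))=0$.

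This vanishing is standard. A complete intersection $X_{i-1}$ of dimension $n-i+1\geq n-r+1$ has $H^1(\cO_{X_{i-1}}(m))=0$ for all $m$ provided $n-i+1\geq 2$. I would prove it by the same induction along the chain $\PP^n\supset X_1\supset\cdots$, using $H^m(\PP^n,\cO(j))=0$ for $0<m<n$ and the exact sequences above; this is the fact already invoked in the proof of Lemma \ref{l1}. The only boundary case is $\dim X_{i-1}=1$, i.e.\ $i=n$ and $X_r$ finite ($r=n$). In that case I would instead read off the ranks from surjectivity of $H^0(\cO_{\PP^n}(j))\to H^0(\cO_{X_{i-1}}(j))$, which follows from projective normality of complete intersections of dimension $\geq 1$. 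I expect this bookkeeping of the low-dimensional edge cases to be the main, if mild, obstacle. The precise fact needed is $H^1(X_{i-1},\cO(m))=0$, which follows from the vanishing of $H^1$ of twists of the ideal sheaf of $X_{i-1}$ via a Koszul resolution.

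For part (2), take global sections. Since $f$ is finite, $f^*\cO_{\PP^{n-r}}(1)=\cO_X(1)$, as the projection from $L_r$ is given by linear forms. By the projection formula,
$$H^0(X,\cO_X(m)) = H^0(\PP^{n-r}_k,\, (f_*\cO_X)(m)).$$
By Lemma \ref{l1}, the right-hand side equals
$$\bigoplus_j H^0(\PP^{n-r},\cO(m-j))^{a_j}.$$
Its dimension is $\sum_{j=0}^m\binom{n-r+m-j}{n-r}a_j$, since only $0\le j\le m$ contribute. Since $\binom{n-r+m-j}{n-r}=\binom{n-r+m-j}{m-j}$, this equals $h_{r,m}$, as claimed.

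Combined, (1) computes all $h_{r,m}$ recursively from $h_{0,j}=\binom{n+j}{n}$. The system in (2) is triangular with diagonal coefficient $1$, so it then determines the $a_j$ uniquely.
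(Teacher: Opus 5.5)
Your argument is the paper's proof: for (1), the restriction sequence $0\to\cO_{X_{i-1}}(j-d_i)\to\cO_{X_{i-1}}(j)\to\cO_{X_i}(j)\to 0$ together with $H^1(X_{i-1},\cO_{X_{i-1}}(j-d_i))=0$; for (2), the projection formula plus Lemma \ref{l1} and counting sections. The one thing to correct is your handling of the boundary case $r=n$.

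Projective normality of the curve $X_{n-1}$ gives surjectivity of $H^0(\cO_{\PP^n}(j))\to H^0(\cO_{X_{n-1}}(j))$. What (1) needs, however, is surjectivity of $H^0(\cO_{X_{n-1}}(j))\to H^0(\cO_{X_n}(j))$. That fails for small $j$, because $h_{n,j}=\deg X_n$ for every $j$ when $X_n$ is finite. For example, a conic $X_1$ cut by a line $H_2$ in $\PP^2$ gives $h_{2,0}=2$, while $h_{1,0}-h_{1,-1}=1$. So relation (1) is actually false in that case, and no bookkeeping can rescue it.

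You should instead record that the setup requires $n-r\geq 1$; for $r=n$ the target is $\PP^0$ and Lemma \ref{l1} is meaningless. Under that assumption every $X_{i-1}$ with $i\le r$ has dimension at least $2$, and no boundary case arises.

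A minor slip: via $0\to\mathcal{I}\to\cO_{\PP^n}\to\cO_{X_{i-1}}\to 0$, the vanishing of $H^1(\cO_{X_{i-1}}(m))$ corresponds to vanishing of $H^2$, not $H^1$, of the twisted ideal sheaf. Your inductive argument along the chain is correct as stated.
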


\begin{proof}
 For the (1), we use the following short exact sequence of sheaves
$$0\,\longrightarrow\, \cO_{X_{i-1}}(j-d_i)\,\longrightarrow\, \cO_{X_{i-1}}(j) \,\longrightarrow\,
g_{i*}\cO_{X_i}(j)\,\longrightarrow\, 0,$$
where $g_i\,:\,X_i\,\longrightarrow\, X_{i-1}$ is the closed immersion.
Since $H^1(X_{i-1},\,\cO_{X_{i-1}}(j-d_i))\,=\,0$ and $H^0(X_i,\,\cO_{X_i}(j))\,=\,H^0(X_{i-1},\,
g_{i*}\cO_{X_i}(j))$, (1) follows.

 For (2), let $g\,:\,X_r\,\longrightarrow\, \PP^n_k$ be the closed immersion. Note that
 \begin{align*}
 H^0(X_r,\,\cO_{X_r}(m)) &\,=\,H^0(X_r,\,\cO_{X_r}\otimes g^*\cO_{\PP^n_k}(m))\\
 &\,=\, H^0(X_r,\,\cO_{X_r}\otimes f^*\cO_{\PP^{n-r}}(m))\ ~~ (\text{as }\, g^*\cO_{\PP^n_k}(m)\, \cong
\,f^*\cO_{\PP^{n-r}_k}(m) ) \\
&\,=\, H^0(\PP^{n-r}_k,\,f_*(\cO_{X_r}\otimes f^*\cO_{\PP^{n-r}_k}(m))) \\
&\,=\, H^0(\PP^{n-r}_k,\,f_*(\cO_{X_r})\otimes \cO_{\PP^{n-r}_k}(m))\ ~~~(Projection ~formula)\\
&\,=\,H^0(\PP^{n-r}_k,\,\oplus_{j\ge 0} [\cO_{\PP^{n-r}_k}(-j)]^{a_j}\otimes \cO_{\PP^{n-r}_k}(m)) ~~~ (Lemma~3.1)\\
&\,=\,\oplus_{j\ge 0} H^0(\PP^{n-r}_k,\, [\cO_{\PP^{n-r}}(m-j)]^{a_j}).\\
 \end{align*}
 Since $\dim (H^0(\PP^{n-r}_k,\, \cO_{\PP^{n-r}}(m-j)))\,=\,\binom{n-r+m-j}{m-j}$ for $j\,\le\, m$ and
it is zero otherwise, and $h_{r,m}\,=\,\dim( H^0(X_r,\,\cO_{X_r}(m)))$, we obtain (2).
\end{proof}

\begin{corollary}\label{hypersurface}
Let $X$ be a curve of degree $d$ in $\PP^2_k$, and let $f\,:\,X\,\longrightarrow\, \PP^1_k$ be the
projection from a point in $\PP^2_k$ outside $X$. Then 
$$f_*\cO_X\ =\ \cO_{\PP^1_k}\oplus\cO_{\PP^1_k}(-1)\oplus\ldots\oplus\cO_{\PP^1_k}(-d+1).$$
\end{corollary}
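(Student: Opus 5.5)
This is the case $n=2$, $r=1$, $d_1=d$ of the setting above. Here $X=X_1=H_1$ is a plane curve of degree $d$, $L_1$ is a point not on $X$, and $f=f_1$. By Lemma \ref{l1}, $f_*\cO_X=\oplus_{j\ge 0}[\cO_{\PP^1_k}(-j)]^{a_j}$. It therefore suffices to show $a_j=1$ for $0\le j\le d-1$ and $a_j=0$ otherwise. The plan is to compute the $h_{1,m}$ from part (1) of Theorem \ref{main} and then invert the triangular system in part (2).

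First I compute $h_{1,m}$ using (1):
$$h_{1,m}=h_{0,m}-h_{0,m-d}=\binom{m+2}{2}-\binom{m-d+2}{2},$$
where the second term is zero for $m<d$. For $m\ge d-1$ this difference equals $dm+1-\binom{d-1}{2}+\ldots$; I will not need a closed form. It is cleaner to work with first differences. Part (2) with $n-r=1$ reads
$$\sum_{j=0}^m (m-j+1)\,a_j=h_{1,m}.$$
Taking the second difference in $m$ isolates $a_m$:
$$a_m=h_{1,m}-2h_{1,m-1}+h_{1,m-2},$$
with the convention that $h_{1,m}=0$ for $m<0$. This holds because the kernel $(m-j+1)$ (set to zero for $j>m$) has second difference equal to the delta function at $j=m$.

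Next I take the second difference of $h_{1,m}=h_{0,m}-h_{0,m-d}$. The second difference of $h_{0,m}=\binom{m+2}{2}$, extended by zero to negative $m$, is the indicator of $m\ge 0$: the first difference gives $m+1$ for $m\ge0$, and the second gives $1$. Hence
$$a_m=\mathbf 1_{m\ge 0}-\mathbf 1_{m\ge d},$$
which is $1$ exactly for $0\le m\le d-1$ and $0$ otherwise. This gives the stated splitting.

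As a consistency check, $f$ has degree $d$, so the rank is $\sum a_j=d$, which agrees. The only delicate point is the bookkeeping at small $m$, namely whether the convention $h_{0,j}=0$ for $j<0$ is compatible with the differencing. Since (1) holds for all $j$ and both sides vanish for negative indices, it is.
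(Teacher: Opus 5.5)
Your proposal is correct and follows the paper's route: specialize Theorem~\ref{main} to $n=2$, $r=1$, $d_1=d$, compute $h_{1,m}=h_{0,m}-h_{0,m-d}$ from part~(1), and invert the triangular system in part~(2). The only difference is how you invert it. You take second differences in $m$, using that the kernel $(m-j+1)$, extended by zero for $j>m$, has second difference $\delta_{j,m}$, while the paper runs an induction on $m$. Your version treats $m<d$ and $m\ge d$ in one computation, whereas the paper handles $m\ge d$ separately ("similarly or by rank").
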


\begin{proof}
Substitute $r\,=\,1$, $n\,=\,2$ and $d_1\,=\,d$ in Theorem \ref{main}. Then 
 \[ h_{1,j}\,=\,h_{0,j}-h_{0,j-d}\,=\,\begin{cases}
\binom{j+2}{2}, &\, \text{ if } 0\,\le\, j\,<\,d;\\
\binom{j+2}{2}-\binom{j-d+2}{2}, &\, \text{ if } j\,\ge\, d.
\end{cases}
\]

Moreover $a_0\,=\,h_{1,0}\,=\,1$ and for $m\,\ge\, 1$, we have $$(m+1)a_0+ma_1+(m-1)a_2+\ldots+1a_m
\ =\ h_{1,m}.$$ Solving for $a_m$, we get
$a_m\,=\,h_{1,m}-((m+1)a_0+ma_1+(m-1)a_2+\ldots+2a_{m-1})$. Taking $m\,<\,d$ and using induction on $m$,
$$a_m\ =\ (m+2)(m+1)/2+((m+1)+m+(m-1)+\ldots +2)\ =\ 1.$$ Also for $m\,\ge\, d$, $$a_m\ =\ 0$$ can be
verified similarly or by observing that $f_*\cO_X$ is of rank $d$.
\end{proof}

\begin{corollary}
Let $X$ be a complete intersection curve of multi-degree $(d_1,\, d_2)$ with $d_1\,\le\, d_2$ in $\PP^3_k$, and let
$f\,:\,X\,\longrightarrow\, \PP^1_k$ be the projection from a line in $\PP^3_k$ not intersecting $X$.
Then $f_*\cO_X\,=\,\oplus_{j=0}^{d_1+d_2-1}\cO_{\PP^1}(-j)^{a_j}$, where $$(a_0,\,a_1,\,\cdots)
\,=\,(1,\,2,\,\cdots,\,d_1,\,d_1,\,\cdots,\,d_1,\,d_1-1,\,d_1-2,\,\cdots,\,1)$$
with $d_1$ repeating $d_2-d_1+1$ times.
\end{corollary}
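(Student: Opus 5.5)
Apply Theorem \ref{main} with $n=3$, $r=2$, $d_1\le d_2$, so that $\PP^{n-r}_k=\PP^1_k$. Lemma \ref{l1} already gives $f_*\cO_X=\oplus_{j\ge 0}\cO_{\PP^1}(-j)^{a_j}$. Part (2) of Theorem \ref{main} with $n-r=1$ then reads
$$\sum_{j=0}^m (m-j+1)\,a_j\,=\,h_{2,m}\qquad (m\ge 0).$$
The left side is the coefficient of $t^m$ in $(1-t)^{-2}\sum_j a_jt^j$. Hence the generating function $P(t):=\sum_j a_jt^j$ satisfies
$$P(t)\,=\,(1-t)^2\sum_{m\ge 0}h_{2,m}t^m .$$
Identifying $P(t)$ in closed form is then enough to read off all the $a_j$.

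Next, compute $\sum_m h_{2,m}t^m$ using part (1) of Theorem \ref{main}. The recursion $h_{i,j}=h_{i-1,j}-h_{i-1,j-d_i}$, valid for all $j$ because $h_{i,j}=0$ for $j<0$, says exactly that the generating series of row $i$ is that of row $i-1$ multiplied by $(1-t^{d_i})$. Since $h_{0,m}=\binom{m+3}{3}$, the series of row $0$ is $(1-t)^{-4}$. Therefore
$$\sum_m h_{2,m}t^m\,=\,\frac{(1-t^{d_1})(1-t^{d_2})}{(1-t)^4},$$
and so
$$P(t)\,=\,\frac{1-t^{d_1}}{1-t}\cdot\frac{1-t^{d_2}}{1-t}\,=\,\Big(\sum_{i=0}^{d_1-1}t^i\Big)\Big(\sum_{l=0}^{d_2-1}t^l\Big).$$

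The main work is the last step: extracting the coefficients of this product. The coefficient $a_j$ equals the number of pairs $(i,l)$ with $0\le i\le d_1-1$, $0\le l\le d_2-1$ and $i+l=j$. For each such $j$ this count is
$$\#\{\,i \;:\; \max(0,\,j-d_2+1)\le i\le \min(d_1-1,\,j)\,\}.$$
Using $d_1\le d_2$, split into three ranges:
\begin{itemize}
\item for $0\le j\le d_1-1$, the count is $j+1$;
\item for $d_1-1\le j\le d_2-1$, the count is $d_1$, which occurs for $d_2-d_1+1$ values of $j$;
\item for $d_2-1\le j\le d_1+d_2-2$, the count is $d_1+d_2-1-j$, decreasing to $1$.
\end{itemize}
This gives exactly the sequence $(1,2,\dots,d_1,\dots,d_1,d_1-1,\dots,1)$ in the statement. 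It also shows $a_j=0$ for $j\ge d_1+d_2-1$, so the sum stops at $j=d_1+d_2-2$ and the stated upper limit $d_1+d_2-1$ is harmless.

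As a consistency check, $P(1)=d_1d_2$, which is the degree of $f$ and hence the rank of $f_*\cO_X$.
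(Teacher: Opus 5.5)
Your proof is correct, and it differs from the paper's in how you solve the triangular system from Theorem \ref{main}(2); both proofs use the same inputs, Lemma \ref{l1} and the two relations of Theorem \ref{main}.

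\textbf{The paper's route.} It writes $h_{1,j}$ and $h_{2,j}$ out piecewise in binomial coefficients. It then solves $\sum_{j\le m}(m-j+1)a_j=h_{2,m}$ for $a_m$ by induction on $m$, one range at a time: $m<d_1$, $m=d_1$, $d_1<m<d_2$, and a ``similar calculation'' for the tail. Each step uses explicit binomial-sum identities.

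\textbf{Your route.} You never need the piecewise formulas. Relation (1) becomes multiplication of the Hilbert series by $1-t^{d_i}$, and relation (2) becomes division of $P(t)=\sum_j a_jt^j$ by $(1-t)^2$. This gives $P(t)=\frac{(1-t^{d_1})(1-t^{d_2})}{(1-t)^2}$ in one line, after which the $a_j$ are a lattice-point count.

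\textbf{What your route buys.} First, the endpoints come out uniformly. The paper's range-by-range bookkeeping is fragile exactly there:
\begin{itemize}
\item its evaluation $a_{d_1}=d_1$ tacitly assumes $d_1<d_2$ (for $d_1=d_2$ one gets $a_{d_1}=d_1-1$);
\item its last case of $h_{2,j}$ should carry $+\binom{j-d_1-d_2+3}{3}$ rather than a minus sign;
\item its tail formula $a_m=d_1+d_2-m$ for $d_2\le m\le d_1+d_2$ is shifted by one relative to your $a_m=d_1+d_2-1-m$ for $d_2-1\le m\le d_1+d_2-2$.
\end{itemize}
Your tail formula is the one the stated sequence and the rank count $P(1)=d_1d_2$ require.

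Second, your argument generalizes verbatim. For a codimension-$r$ complete intersection in $\PP^n$ projected to $\PP^{n-r}$, the same manipulation gives $\sum_j a_jt^j=\prod_{i=1}^r(1+t+\cdots+t^{d_i-1})$, independent of $n$. This recovers Corollary \ref{hypersurface} and makes the two remarks after this corollary precise.

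The paper's approach has the merit of staying with explicit dimension counts throughout. But it is longer, and that index-by-index bookkeeping is where its slips occur.
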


\begin{proof}
Let $X_1$ be a surface of degree $d_1$ in $\PP^3_k$, and $X\,\subset\, X_1$.
Set $r\,=\,2$ and $n\,=\,3$ in Theorem \ref{main}. We have the following:
 \[ h_{1,j}\,=\,h_{0,j}-h_{0,j-d_1}\,=\,\begin{cases}
\binom{j+3}{3}, &\, \text{ if }\,\, 0\,\le\, j\,<\,d_1,\\
\binom{j+3}{3}-\binom{j-d_1+3}{3}, &\, \text{ if }\,\, j\,\ge\, d_1;
\end{cases}
\]
 
 \[ h_{2,j}\,=\,h_{1,j}-h_{1,j-d_2}\,=\,\begin{cases}
\binom{j+3}{3}, &\text{ if }\, 0\,\le\, j\,<\,d_1,\\
\binom{j+3}{3}-\binom{j-d_1+3}{3}, &\text{ if }\, d_1\,\le\, j \,<\, d_2,\\
\binom{j+3}{3}-\binom{j-d_1+3}{3}-\binom{j-d_2+3}{3} &\text{ if }\, d_2\le j < d_2+d_1,\\
\binom{j+3}{3}-\binom{j-d_1+3}{3}-\binom{j-d_2+3}{3}-\binom{j-d_2-d_1+3}{3} &\text{ if }\, j \,\ge\, d_2+d_1.
\end{cases}
\]
 
 Again by Theorem \ref{main} (2), $a_0\,=\,1$, and for $m\,\ge\, 1$,
$$(m+1)a_0+ma_1+(m-1)a_2+\ldots+1a_m\ =\ h_{2,m}.$$
 For $m\,<\,d_1$, by induction on $m$, 
$$a_m\, =\, (m+3)(m+2)(m+1)/6-[1(m+1)+2m+3(m-1)+\ldots+m(m-m+2)]\,=\,m+1,$$
since $\sum_{i=1}^{m+1}i(m+2-i)\,=\,\binom{m+3}{3}$.
 Again using this equation, $$a_{d_1}\ =\ \binom{d_1+3}{3}-1-[1(d_1+1)+2d_1+\ldots d_12]\ =\ d_1.$$

For $d_1\,<\, m \,<\,d_2$,
\begin{align*}
a_{m} &=\binom{m+3}{3}-\binom{m-d_1+3}{3}-\left[\sum_{i=1}^{d_1}i(m+2-i)+d_1\sum_{i=d_1+1}^{m}(m+2-i)\right]\\
&=\left[\sum_{i=d_1+1}^{m+1}i(m+2-i)\right]-\left[\sum_{i=1}^{m-d_1+1}i(m-d_1+2-i)\right]-d_1\left[\sum_{i=d_1+1}^{m}(m+2-i)\right]\\
&=\left[\sum_{i=d_1+1}^{m}(i-d_1)(m+2-i)\right]+m+1-\left[\sum_{i=1}^{m-d_1+1}i(m-d_1+2-i)\right]\\
&=d_1.
\end{align*}

Similar calculation for $d_2\,\le \,m\,\le\, d_2+d_1$ gives $a_m\,=\,d_1+d_2-m$.
\end{proof}

\begin{remark}
 Though the above computation has been done for curves, the formula for $a_m$'s depend only on
the codimension of the complete intersection. In other words, if $X$ is a hypersurface of degree $d$
in $\PP^n_k$, then its Harder-Narasimhan filtration is same as that given in Corollary \ref{hypersurface}.
\end{remark}

\begin{remark}
The same computation can be done for higher codimension complete intersection as well using
Theorem \ref{main} to obtain a pattern for its Harder-Narasimhan filtration. 
\end{remark}

\section{A property of direct images}

Let $X$ be an irreducible smooth projective curve over an algebraically closed field $k$, and
let $\varphi\, :\, X\, \longrightarrow\, {\mathbb P}^1_k$ be a finite morphism. Take a vector 
bundle $E$ on $X$ such that
\begin{equation}\label{z1}
H^0(X,\, E)\ =\ 0\ =\ H^1(X,\, E).
\end{equation}

\begin{lemma}\label{lem-f}
The direct image $\varphi_*E$ is a direct sum of copies of the line bundle
${\mathcal O}_{{\mathbb P}^1_k}(-1)$. In particular, $\varphi_*E$ is semistable.
\end{lemma}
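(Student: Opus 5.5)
Since $\varphi$ is finite, $\varphi_*E$ is a torsion-free coherent sheaf on the smooth curve ${\mathbb P}^1_k$, hence locally free. By Grothendieck's theorem we may write
$$\varphi_*E\ =\ \bigoplus_{i=1}^{N}{\mathcal O}_{{\mathbb P}^1_k}(b_i)$$
with $N\,=\,\deg(\varphi)\cdot{\rm rank}(E)$ and integers $b_i$. The plan is to show $b_i\,=\,-1$ for every $i$, using only the cohomology vanishing in \eqref{z1}.

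The key input is that finite morphisms have no higher direct images, so $R^1\varphi_*E\,=\,0$. The Leray spectral sequence then gives
$$H^m({\mathbb P}^1_k,\, \varphi_*E)\ =\ H^m(X,\, E)\qquad (m\,=\,0,\,1).$$
By \eqref{z1} both groups vanish, so
$$\bigoplus_i H^0({\mathbb P}^1_k,\, {\mathcal O}_{{\mathbb P}^1_k}(b_i))\ =\ 0\ =\ \bigoplus_i H^1({\mathbb P}^1_k,\, {\mathcal O}_{{\mathbb P}^1_k}(b_i)).$$
Each summand must therefore vanish on its own. Vanishing of $H^0$ forces $b_i\,\leq\, -1$. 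Vanishing of $H^1\,=\,H^0({\mathcal O}_{{\mathbb P}^1_k}(-2-b_i))^*$ forces $-2-b_i\,<\,0$, that is, $b_i\,\geq\, -1$. Hence $b_i\,=\,-1$ for all $i$, which proves the first assertion.

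Semistability follows at once, because a direct sum of copies of a single line bundle is semistable. In positive characteristic the Frobenius pullback is again a direct sum of copies of ${\mathcal O}_{{\mathbb P}^1_k}(-p)$, so $\varphi_*E$ is even strongly semistable. There is no real obstacle in this argument; the only points to state explicitly are the local freeness of $\varphi_*E$ (torsion-free on a smooth curve) and the vanishing of $R^1\varphi_*$ for the finite, hence affine, map $\varphi$.
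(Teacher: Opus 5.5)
Your proposal is correct and follows the paper's argument: Grothendieck splitting of $\varphi_*E$, the identification $H^j(X,E)=H^j({\mathbb P}^1_k,\varphi_*E)$ for $j=0,1$ because $\varphi$ is finite, and the observation that ${\mathcal O}_{{\mathbb P}^1_k}(-1)$ is the only line bundle on ${\mathbb P}^1_k$ with vanishing $H^0$ and $H^1$. The points you make explicit (local freeness of $\varphi_*E$, the Serre duality step, and the strong semistability remark) are sound; the paper leaves them implicit.
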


\begin{proof}
A theorem of Grothendieck says that any vector bundle on ${\mathbb P}^1_k$ is a direct sum of line bundles
(\cite{Grothendieck-splitting}). Let
$$
\varphi_*E \ =\ \bigoplus_{i=1}^r L_i
$$
be a decomposition of $\varphi_* E$ into a direct sum of line bundles. We have
$H^j (X,\, E)\,=\, H^j({\mathbb P}^1_k,\, \varphi_*E)$ for $j\,=\, 0,\, 1$ because $\varphi$ is a finite
morphism. Consequently, the given in \eqref{z1} implies that
\begin{equation}\label{z2}
H^j({\mathbb P}^1_k,\, L_i)\,=\, 0
\end{equation}
for $j\,=\, 0,\, 1$ and $1\, \leq\, i\, \leq\, r$. From \eqref{z2} it follows immediately that
$L_i\,=\, {\mathcal O}_{{\mathbb P}^1_k}(-1)$.
\end{proof}

\section*{Acknowledgements}

We thank the referee for useful comments. The first-named
author is partially supported by a J. C. Bose Fellowship (JBR/2023/000003).


\begin{thebibliography}{ZZZZZ}

\bibitem{Grothendieck-splitting} A. Grothendieck, Sur la classification des fibr\'es holomorphes sur la 
sph\`ere de Riemann, {\it Amer. J. Math.} {\bf 79} (1957), 121--138.

\bibitem{Horrocks} G. Horrocks, Vector bundles on the punctured spectrum of a local ring, {\it Proc. London 
Math. Soc.} {\bf 14} (1964), 689--713.

\bibitem{HL} D. Huybrechts and M. Lehn, {\it The geometry of moduli spaces of sheaves}, Aspects
of Mathematics, E31, Friedr. Vieweg~\&~Sohn, Braunschweig, 1997.

\bibitem{RR} S. Ramanan and A. Ramanathan, Some remarks on the instability flag,
{\it Tohoku Math. Jour.} {\bf 36} (1984), 269--291.

\end{thebibliography}
\end{document}